	\newaliascnt{lemma}{thm}
	\newaliascnt{prop}{thm}
	\newtheorem{prop}[prop]{Proposition} 
	\newaliascnt{cor}{thm}
	\theoremstyle{remark}
	\newaliascnt{rem}{thm}
	\newtheorem{rem}[rem]{Remark}
	\theoremstyle{definition}
	\newaliascnt{exm}{thm}
	\newaliascnt{notn}{thm}
	\newaliascnt{defn}{thm}
	\newcommand{\K}{\mathbb{K}}
	\newcommand{\rk}{\operatorname{rk}}
	\newcommand{\Sy}{\operatorname{Sym}}
	\newcommand{\Ps}{\mathbb{P}}
	\newcommand{\Span}[1]{\left\langle\,#1\,\right\rangle}
\begin{document}
\title[Structures $\ldots$ Decompositions of Ternary Quartics]{Structures of the Length Seven Power Sum Decompositions of Ternary Quartics}
\author[Alessandro De Paris]{Alessandro De Paris}
\address{Dipartimento di Scienze Umane, Link Campus University, Roma (Italy)}
\address{ORCID: \href{https://orcid.org/0000-0002-4619-8249}{0000-0002-4619-8249}}
\email{a.deparis@unilink.it}
\keywords{Waring problem, rank, symmetric tensors.}
\subjclass[2020]{15A21, 15A69, 15A72, 14A25, 14N05, 14N1.}

\begin{abstract}
Motivated by the search for a deeper understanding of tensor rank, in view of its computational complexity applications, we investigate a possible path to determine the maximum symmetric rank in given degree and dimension. We work in terms of Waring rank of forms, and aiming to set up a firm basis for an induction procedure we examine some technical tools to organize length seven Waring decompositions of ternary quartics.
\end{abstract}

\maketitle

\section{Introduction}

A problem of interest in Algebraic Complexity Theory is to find the maximum symmetric rank for symmetric tensors of given type. It can be equivalently cast in terms of Waring rank, and this way gives rise to a polynomial version of the Waring problem in Number Theory, which is extensively discussed in \cite{G} in connection with the Alexander-Hirschowitz theorem in Algebraic Geometry. A clear idea on the general context we are moving in can be obtained by giving a look on the books \cite{L1}, \cite{L2} and the lecture notes \cite{OR}. A small sample of recent results close to this subject is given by \cite{AC}, \cite{BT2}, \cite{CGLV}.

The gap between the best lower and upper bounds that are known to date for Waring ranks in the space $S_d$ of \emph{all} ternary forms of degree $d$ is quite narrow (see \cite[Sect.~6]{D3}). A possible answer to the problem is given at the end of \cite[Introduction]{D3}, and another intriguing possibility is
\begin{equation}\label{2guess}
\left\lfloor\frac{\dim S_d}2\right\rfloor
\end{equation}
(which vaguely resembles the Blekhermann-Teitler upper bound established in \cite{BT}). Both guesses agree with the known values for $d$ up to $5$, and with the known asymptotic estimate $d^2/4$. The first $d$ for which we get two different answers is $6$: the first prediction gives $13$, and the other $14$. In spite of the promising outcomes of the recently set up techniques for finding upper bounds, even the simplest unknown case $d=6$ has resisted our investigation. Much less effort has been devoted to check the second possibility, but at the moment we do not see how \eqref{2guess} could give the right answer to the problem.

In this situation, an attempt that we believe is worthy of being pursued is a deeper study of the structure of some kind of power sum decompositions that are suggested by the guiding picture in \cite{D3}. For sextics, the projective geometric description of such decompositions is obtained by considering five distinct lines, two of them containing each three points, the other three lines containing two points, and an extra point outside the lines. A dimension count shows that $d=6$ is the first level that requires a nongeneric choice of these lines, and in order to refine the inductive method set up in \cite{D3}, one needs a better control of the decompositions at the level $d=4$. For quartics, note that seven points in linear generic position (in a plane) can always be arranged in three pairs on three distinct lines and an extra point outside, and this is also true for most of the linear special positions.

In this work we describe a systematic procedure to detect such decompositions of quartics, which has some chance of being extended to higher degrees, and constitute a framework in which power sum decompositions of length seven can be organized. But perhaps the most interesting fact is that there are special quartics that behaves oddly with respect to what is expected in that framework.

\section{Notation}

We assume the notation of \cite[Sect.~2]{D3}. In particular, throughout the paper we keep the notation $\K$ for an algebraically closed field of characteristic zero and $S^\bullet=\Sy^\bullet S^1$, $S_\bullet=\Sy^\bullet S_1$ for two symmetric $\K$-algebras between which an \emph{apolarity pairing} is given (for details see \cite[Introduction]{D1}). Basically, $S^\bullet$ and $S_\bullet$ can be regarded as rings of polynomials in a finite and the same number of indeterminates, acting on each other by constant coefficients partial differentiation. For each $x\in S^\bullet$ and $f\in S_\bullet$ we shall denote by $\partial_xf$ the apolarity action of $x$ on $f$. The evaluation of a homogeneous form $x\in S^d$ on $v\in S_1$ is given by
\[
x(v):=\frac{\partial_xv^d}{d!}\,.
\]
The (Waring) \emph{rank} of $f\in S_d$, $d>0$, denoted by $\rk f$, is the least of the numbers $r$ such that $f$ can be written as a sum of $r$ $d$-th powers of forms in $S_1$. The span of $v_1,\ldots,v_n$ in some vector space $V$ will be denoted by $\Span{v_1,\ldots,v_n}$, and we assume as a formal definition of the projective space $\Ps V$, the set of all one-dimensional subspaces $\Span{v}\subseteq V$, $v\ne 0$. The sign $\perp$ will refer to orthogonality with respect to the apolarity pairing $S^d\times S_d\to\K$, when some degree $d$ is fixed (sometimes implicitly).

We also explicitly recall from \cite[Def.~2.2]{D3} that if a decomposition
\begin{equation}\label{D}
f=\lambda_1{v_1}^d+\cdots+\lambda_r{v_r}^d\;,\quad\lambda_1,\ldots,\lambda_r\in\K,\;v_1,\ldots,v_r\in S_1\;,
\end{equation}
is given and $x\in S^\delta$ vanishes on no one of $v_1,\ldots ,v_r$, then \emph{the} $x$-antiderivative of $f$ relative to \eqref{D} is
\[
F:=\frac{d!\lambda_1}{(d+\delta)!x\left(v_1\right)}{v_1}^{d+\delta}+\cdots+\frac{d!\lambda_r}{(d+\delta)!x\left(v_r\right)}{v_r}^{d+\delta}\;;
\]
when the powers ${v_1}^d,\ldots, {v_r}^d$ are linearly independent, we also say that the antiderivative is relative to $v_1,\ldots, v_r$. The name is due of course to $\partial_xF=f$.

\section{A Framework for Length Seven Power Sum\break Decompositions of Ternary Quartics}

\begin{rem}\label{uno}
Assume that $\dim S_1=2$. Let $\Span q\in\Ps S_2$, $\Span x\in\Ps S^1$, and let \[L:=S_3\cap\partial_{x}^{-1}\left(\Span{q}\right)\;.\] According to \cite[Lemma~2.6]{D3}, if $q$ is not a square, then there exists an isomorphism of projective spaces \[\omega:\Ps\Span q^\perp\overset\sim\to\Ps L\] such that $\partial_hF_h=0$ when $\Span{F_h}=\omega(\Span h)$. The stronger property
\begin{equation}\label{Prope}
\Span{F}=\omega(\Span h)\;\iff\;\partial_hF=0
\end{equation}
(with $\Span h\in\Ps\Span q^\perp$, $\Span F\in\Ps\Span L$) can easily be checked.
\end{rem}

\begin{rem}\label{due}
Assume that $\dim S_1=3$. Let $c\in S_3$, $x^0,x^1\in S^1$ be linearly independent, and suppose that \[\partial_{x^0}c=:q_1\ne 0\;,\qquad\partial_{x^1}c=:q_0\ne 0\;,\qquad\partial_{x^0x^1}c=0\;.\] For $i\in\{0,1\}$ let us consider the rings $S_{i,\bullet}:=\ker\partial_{x^i}$, $S_i^\bullet:=S^\bullet/\left(x^i\right)$, with the apolarity pairing induced by that of $S_\bullet$ and $S^\bullet$, and set
\[
L_i:=S_{i,3}\cap\partial_{x^{1-i}}^{-1}\left(\Span{q_i}\right)\;.\]
Given $F\in L_0$, we have $\partial_{x^1}F=\lambda(F)q_0$ for some scalar $\lambda(F)$, so that a linear form $\lambda$ on $L_0$ is defined. This allows us to define a vector space isomorphism
\[
\psi:L_0\to L_1\;,\qquad F\;\mapsto\;\lambda(F)c-F
\]
(cf.\ the proof of \cite[Lemma~2.7]{D3}), which gives an isomorphism of projective spaces \[\Ps\psi:\Ps{L_0}\to\Ps{L_1}\;.\] We have $c\in\Span{F,\psi(F)}$ for all $\Span{F}$ in $\Ps{L_0}$ except the point $\Span{v^3}\in\Ps{L_0}\cap\Ps{L_1}$, with $\Span{v}:=\Span{x^0,x^1}^\perp$, which is a fixed point of $\Ps\psi$.\\
For $i\in\{0,1\}$, if $q_i$ is not a square, according to \autoref{uno} with $S_{i,\bullet}$ and $S^i_{\bullet}$ in place of $S_\bullet$ and $S^\bullet$, respectively, and the coset $x^{1-i}+\left(x^i\right)$ in place of $x$, we also have an isomorphism of projective spaces
\[
\omega_i:\Span{q_i}^\perp\to\Ps L_i\;,
\]
where the orthogonal complement is understood inside $S_i^2$, not in $S^2$ (otherwise, one may write $\left(\,\Span{q_i}^\perp+\left(x^i\right)\,\right)/\left(x^i\right)$).
We end up with an isomorphism of projective spaces
\[
\theta:=\omega_1^{-1}\,\circ\,\Ps\psi\,\circ\,\omega_0:\Ps\Span{q_0}^\perp\to\Ps\Span{q_1}^\perp
\]
whenever $q_0,q_1$ are not squares. If $\partial_{h^0}\left(F_0\right)$ for some $\Span{h^0}\in\Ps\Span{q_0}^\perp$, $\Span{F_0}\in\Ps L_0$ and $\theta\left(\Span{h^0}\right)=\Span{h^1}$, we have
\begin{equation}\label{Cond}
    \Span{F_1}=\Span{\psi\left(F_0\right)}\qquad\iff\qquad\partial_{h^1}\left(F_1\right)=0\;,\quad\Span{F_1}\in\Ps L_1
\end{equation}
(because of \eqref{Prope} and the definition of $\theta$).
\end{rem}
 
\begin{prop}\label{main}
Suppose that $\dim S_1=3$, $f\in S_4$ and that $x^0, x^1,x^2\in S^1$ are linearly independent forms such that $\partial_{x^0x^1x^2}f=0$.\\
Let $\sigma$ be the cyclic permutation $0\mapsto 1,\,1\mapsto 2,\,2\mapsto 0$, for each $i\in\{0,1,2\}$ set $i':=\sigma(i)$, $i'':=\sigma\left(i'\right)$, \[q_i:=\partial_{x^{i'}x^{i''}}f\;,\qquad \Span{v_i}:=\Span{x^{i'},x^{i''}}^\perp\;,\] and with reference to the dually paired rings $S_{i,\bullet}:=\ker\partial_{x^i}$, $S_i^\bullet:=S^\bullet/\left(x^i\right)$, let \[\theta_i:\Ps\Span{q_i}^\perp\to\Ps\Span{q_{i'}}^\perp\] be defined as in \autoref{due} with $\partial_{x^{i''}}f$ in place of $c$ and $x^i,x^{i'}$ in place of $x^0,x^1$, respectively. Finally, let us suppose that
\begin{itemize}
\item $q_0$, $q_1$, $q_2$ are not squares;
\item $\Span{h^0}$ is a fixed point of $\theta_2\circ\theta_1\circ\theta_0$ (in other words, $h^0$ is an eigenvector of an underlying vector space automorphism);
\item the roots of $\Span{h^0}$, $\Span{h^1}:=\theta_1\left(\Span{h^0}\right)$ and $\Span{h^2}:=\theta_2\left(\Span{h^1}\right)$ in the lines $\Ps S_{0,1}$, $\Ps S_{1,1}$ and $\Ps S_{2,1}$, respectively, are distinct and different from $\Span{{v_0}^2}$, $\Span{{v_1}^2}$ and~$\Span{{v_2}^2}$.
\end{itemize}
Then $f$ admits a decomposition as a sum of exactly six fourth powers of linear forms, each spanning the above mentioned roots. 
\end{prop}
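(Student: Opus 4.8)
The plan is to unwind the projective machinery into three concrete ``cube-span'' relations among the first partial derivatives of $f$, to read off six candidate points from the roots of $h^0,h^1,h^2$, and then to show that the obvious candidate sum of six fourth powers is actually $f$. First, for each $i$ I would fix a representative $F_i$ with $\Span{F_i}=\omega_i(\Span{h^i})$. By \eqref{Prope}, read in the transported instance of \autoref{uno} used to build $\omega_i$, we have $\partial_{h^i}F_i=0$; since by hypothesis $h^i$ has two distinct roots $\Span{a_i},\Span{b_i}$ on the line $\Ps S_{i,1}$, binary apolarity gives $F_i\in\Span{a_i^3,b_i^3}$. The same two roots yield a decomposition $q_i=\mu_i a_i^2+\nu_i b_i^2$ with $\mu_i,\nu_i\ne 0$, precisely because $q_i$ is not a square. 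This produces six points $\Span{a_0},\Span{b_0},\dots,\Span{a_2},\Span{b_2}$, which by the last hypothesis are distinct and avoid the three vertices $\Span{v_0},\Span{v_1},\Span{v_2}$ of the triangle cut out by the lines $\Ps S_{0,1},\Ps S_{1,1},\Ps S_{2,1}$.

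Next I would exploit the fixed-point hypothesis. Setting $\Span{h^1}:=\theta_0(\Span{h^0})$ and $\Span{h^2}:=\theta_1(\Span{h^1})$, the fact that $\Span{h^0}$ is fixed by $\theta_2\circ\theta_1\circ\theta_0$ gives $\theta_2(\Span{h^2})=\Span{h^0}$, that is, $\theta_i(\Span{h^i})=\Span{h^{i'}}$ for every $i$. Since $\theta_i=\omega_{i'}^{-1}\circ\Ps\psi_i\circ\omega_i$, this means $\Span{F_{i'}}=\Span{\psi_i(F_i)}$, and then the relation $c\in\Span{F,\psi(F)}$ of \autoref{due}, applied with $c=\partial_{x^{i''}}f$, yields for each $i$
\[
\partial_{x^{i''}}f\in\Span{F_i,F_{i'}}\subseteq\Span{a_i^3,b_i^3,a_{i'}^3,b_{i'}^3}
\]
(the exceptional point $\Span{v_{i''}^3}$ of \autoref{due} being excluded because the roots avoid the vertices). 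It is worth stressing that all three of these memberships hold \emph{simultaneously} only because $\Span{h^0}$ is a fixed point of the entire composite: any two can be arranged freely, and closing the cycle is exactly the content of the fixed-point condition.

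Now I would introduce the candidate $\Phi:=\sum_{i}\bigl(\alpha_i a_i^4+\beta_i b_i^4\bigr)$ and fix the scalars by imposing $\partial_{x^{i'}x^{i''}}\Phi=q_i$ for each $i$. This determines $\alpha_i,\beta_i$ uniquely and makes them nonzero: among the six fourth powers only $a_i^4,b_i^4$ survive $\partial_{x^{i'}x^{i''}}$, and the surviving evaluations $x^{i'}(a_i),x^{i''}(a_i),\dots$ are nonzero because the roots avoid the vertices. With this choice every second derivative of $f-\Phi$ along two distinct of the $x^j$ vanishes, whence $f-\Phi\in\Span{v_0^4,v_1^4,v_2^4}$.

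The hard part is to kill this residue. Fixing $i$ and applying $\partial_{x^{i''}}$, the previous step gives $\partial_{x^{i''}}(f-\Phi)\in\Span{v_{i''}^3}$, while the membership of the second paragraph together with a direct computation of $\partial_{x^{i''}}\Phi$ (the line-$i''$ powers are annihilated) shows that both $\partial_{x^{i''}}f$ and $\partial_{x^{i''}}\Phi$ lie in $W:=\Span{a_i^3,b_i^3,a_{i'}^3,b_{i'}^3}$. Thus $\partial_{x^{i''}}(f-\Phi)\in\Span{v_{i''}^3}\cap W$. But $\Span{v_{i''}}$ is the common point of the lines $\Ps S_{i,1}$ and $\Ps S_{i',1}$ carrying the four roots, and it is different from all of them, so a plane cubic through the four roots and missing $\Span{v_{i''}}$ exists (a conic through the four points times a line off $\Span{v_{i''}}$); hence $v_{i''}^3\notin W$ and $\partial_{x^{i''}}(f-\Phi)=0$. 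Letting $i$ run through the $3$-cycle annihilates all three first derivatives of $f-\Phi$, forcing $f=\Phi$; since the six roots are distinct and every coefficient is nonzero, this exhibits $f$ as a sum of exactly six fourth powers spanning the prescribed roots. I expect this final step — verifying $v_{i''}^3\notin W$, and thereby using the fixed-point hypothesis (all three memberships at once) and the genericity of the roots together — to be the main obstacle, alongside the bookkeeping needed to carry \eqref{Prope} and \eqref{Cond} through the three copies of \autoref{due}.
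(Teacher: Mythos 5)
Your proposal is correct in substance and follows the paper's skeleton: your $\Phi$ is exactly the paper's candidate $f_0+f_1+f_2$ (each $f_i$ being the $x^{i'}x^{i''}$-antiderivative of $q_i$ relative to the two roots, with your nonvanishing checks for $\alpha_i,\beta_i$ matching the paper's appeal to vertex-avoidance), the fixed-point hypothesis is used identically (to close the cycle $\theta_i\left(\Span{h^i}\right)=\Span{h^{i'}}$ for all three $i$, so that \eqref{Prope} and \eqref{Cond} can be run through each instance of \autoref{due}; you also correctly read the statement's $\Span{h^1}:=\theta_1\left(\Span{h^0}\right)$ as an index slip for $\theta_0\left(\Span{h^0}\right)$, in agreement with the paper's proof), and the endgame is the same ``all first derivatives of $f-\Phi$ vanish, hence $f=\Phi$''. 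Where you genuinely diverge is the pincer that kills the residue. The paper shows that $\partial_{x^i}f$ and the two pieces of $\partial_{x^i}\Phi$ lie in a \emph{two}-plane $\Span{F_{ii'},F_{ii''}}$, gets the difference into $\Span{{v_i}^3}$ by matching second derivatives, and excludes ${v_i}^3$ from that plane by simply evaluating $\partial_{x^{i'}}$ and $\partial_{x^{i''}}$ on a putative linear relation. You work instead in the four-dimensional cube span $W=\Span{{a_i}^3,{b_i}^3,{a_{i'}}^3,{b_{i'}}^3}$ and exclude ${v_{i''}}^3$ by a projective-geometry argument (a cubic through the four roots missing the vertex, via apolarity/evaluation duality). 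Your version is slightly less bookkeeping-intensive — you never need to recognize $\partial_{x^{i''}}\Phi$'s pieces as a $\psi$-related pair — at the price of a genuinely geometric independence lemma where the paper uses two lines of linear algebra.

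Two repairs are needed, neither fatal. First, you fix a single $F_{i'}$ with $\Span{F_{i'}}=\omega_{i'}\left(\Span{h^{i'}}\right)$ and then assert $\Span{F_{i'}}=\Span{\psi_i\left(F_i\right)}$; but two different maps out of $\Ps\Span{q_{i'}}^\perp$ are in play: the ``$\omega_1$'' of the instance defining $\theta_i$, landing in $\left\{F\in S_{i',3}:\partial_{x^i}F\in\Span{q_{i'}}\right\}$, and the ``$\omega_0$'' of the instance defining $\theta_{i'}$, landing in $\left\{F\in S_{i',3}:\partial_{x^{i''}}F\in\Span{q_{i'}}\right\}$. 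Their values at $\Span{h^{i'}}$ are respectively the $x^i$- and the $x^{i''}$-antiderivative of $q_{i'}$ relative to $a_{i'},b_{i'}$, which are proportional only when $x^i\left(a_{i'}\right)x^{i''}\left(b_{i'}\right)=x^i\left(b_{i'}\right)x^{i''}\left(a_{i'}\right)$ — generically false. This is precisely why the paper carries \emph{six} cubics $F_{ii'},F_{ii''}$ (two over each line) rather than three. The slip is harmless for you, because all you use downstream is $\psi_i\left(F_i\right)\in\Span{{a_{i'}}^3,{b_{i'}}^3}$, which follows from $\partial_{h^{i'}}\psi_i\left(F_i\right)=0$ via \eqref{Prope} applied to the correct $\omega$; but it should be stated that way, without a global $F_{i'}$. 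Second, the parenthetical ``a conic through the four points times a line off $\Span{v_{i''}}$'' is too quick: the most obvious conic through the four points, $x^ix^{i'}$, \emph{does} pass through $\Span{v_{i''}}$, which is the base point of the two lines carrying the roots. Take instead the pair of cross lines — the line through $\Span{a_i},\Span{a_{i'}}$ and the line through $\Span{b_i},\Span{b_{i'}}$ — each of which misses the vertex because the roots avoid it (otherwise the line would degenerate to $x^i=0$ or $x^{i'}=0$, forcing a root to be the vertex), multiplied by $x^{i''}$, which satisfies $x^{i''}\left(v_{i''}\right)\ne 0$. With these adjustments your argument is complete.
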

\begin{proof}
For each $i\in\{0,1,2\}$ let $P_i, Q_i$ be the roots of $\Span{h_i}$ in $\Ps S_{i,1}$. Since $h_i\in\Span{q_i}^\perp$, $P_i\ne Q_i$ and $q_i$ is not a square, the apolarity lemma gives decompositions
\begin{equation}\label{Dec}
    q_i={u_i}^2+{w_i}^2\;,\text{ with } \Span{u_i}=P_i,\Span{w_i}=Q_i\qquad\forall i\in\{0,1,2\}\;.
\end{equation}
For each $i\in\{0,1,2\}$, let $f_i$ be the $x^{i'}x^{i''}$-antiderivative of $q_i$ relative to $u_i,w_i$ and note that $\partial_{x^i}f_i=0$. We have
\begin{equation}\label{start}
\partial_{x^i}\left(f_0+f_1+f_2\right)=F_{ii'}+F_{ii''}\;,
\end{equation}
with
\[
F_{ii'}:=\partial_{x^i}f_{i'}\;,\qquad F_{ii''}:=\partial_{x^i}f_{ii''}\;,
\]
so that $F_{ii'}$ is the $x^{i''}$-antiderivative of $q_{i'}$ and $F_{ii''}$ is the $x^{i'}$-antiderivative of $q_{i''}$ (relative to \eqref{Dec}), and $\partial_{x^{i'}}F_{ii'}=0$, $\partial_{x^{i''}}F_{ii''}=0$.
Let
\[
L_{ii'}:=S_{i',3}\cap\partial_{x^{i''}}^{-1}\left(q_{i'}\right)\;,\qquad L_{ii''}:=S_{i'',3}\cap\partial_{x^{i'}}^{-1}\left(q_{i''}\right)\;,
\]
so that
\[
F_{ii'}\in L_{ii'}\;,\qquad F_{ii''}\in L_{ii''}\;,
\]
and let
\[
\psi_i:L_{ii'}\to L_{ii''}
\]
    be defined as in \autoref{due} with $\partial_{x^i}f$ in place of $c$ and $x^{i'},x^{i''}$ in place of $x^0,x^1$, respectively. Since $\Span{h^0}$ is a fixed point of $\theta_2\circ\theta_1\circ\theta_0$, we have that $\theta_2\left(\Span{h^2}\right)=\Span{h^0}$, so that $\Span{\theta_{i'}\left(h_{i'}\right)}=\Span{h_{i''}}$ for whatever choice of $i\in\{0,1,2\}$. On the other hand we have $\partial_{h^{i'}}\left(F_{ii'}\right)=0$, because of the decomposition~\eqref{Dec} (with $i'$ in place of $i$), the fact that $\Span{u_{i'}},\Span{w_{i'}}$ are the roots of $h^{i'}$ and the definition of the antiderivative relative to $u_{i'}$, $w_{i'}$; similarly, $\partial_{h^{i''}}\left(F_{ii''}\right)=0$. According to \eqref{Cond}, this shows that $\Span{F_{i''}}=\Span{\psi_{i'}\left(F_{i'}\right)}$, and consequently that $\partial_{x^i}f\in\Span{F_{ii'},F_{ii''}}$ (by \autoref{due} and the fact that $\Span{F_{ii'}}\ne\Span{{v_i}^3}$ because of its decomposition in terms of $u_{i'},w_{i'}$).

Since $F_{ii'}+F_{ii''}\in\Span{F_{ii'},F_{ii''}}$ as well, we have
\[
\partial_{x^i}f-\left(F_{ii'}+F_{ii''}\right)\in\Span{F_{ii'},F_{ii''}}\,.
\]
Moreover,
\[
\partial_{x^{i'}}\partial_{x^i}f=q_{i''}=\partial_{x^{i'}}F_{ii''}=\partial_{x^{i'}}F_{ii''}+\partial_{x^{i'}}F_{ii'}\;,
\]
\[
\partial_{x^{i''}}\partial_{x^i}f=q_{i'}=\partial_{x^{i''}}F_{ii'}=\partial_{x^{i''}}F_{ii'}+\partial_{x^{i''}}F_{ii''}\;,
\]
hence $\partial_{x^i}f$ and $F_{ii'}+F_{ii''}$ differ by a multiple of ${v_i}^3$, that is
\[ 
\partial_{x^i}f-\left(F_{ii'}+F_{ii''}\right)\in\Span{{v_i}^3}\;.
\]
We have shown that
\[
\partial_{x^i}f-\left(F_{ii'}+F_{ii''}\right)\in\Span{F_{ii'},F_{ii''}}\cap\Span{{v_i}^3}\;,
\]
but that intersection is the zero space because ${v_i}^3\not\in\Span{F_{ii'},F_{ii''}}$ (since $\partial_{x^{i'}}v_i=0$, $\partial_{x^{i'}}F_{ii'}=0$, $\partial_{x^{i'}}F_{ii''}=q_{i''}\ne 0$, and $\partial_{x^{i''}}v_i=0$, $\partial_{x^{i''}}F_{ii'}=q_{i'}\ne 0$, $\partial_{x^{i''}}F_{ii''}=0$).

Recalling \eqref{start} we deduce that
\[
\partial_{x^i}f=\partial_{x^i}\left(f_0+f_1+f_2\right)
\]
for all $i\in\{0,1,2\}$, and since $x^0, x^1,x^2\in S^1$ are linearly independent, this means that
\[
f=f_0+f_1+f_2\;.
\]
To conclude the proof is just to point out that $f_i$ is a sum of two fourth powers of multiples of $u_i$ and $w_i$, and $\Span{u_i}=P_i,\Span{w_i}=Q_i$ are the roots of $h^i$.
\end{proof}

\begin{rem}\label{procedura}
The above proposition suggests a procedure to decompose a given $f\in S_4$, with $\dim S_1=3$ as a sum of exactly seven fourth powers of linear forms:
\begin{itemize}
	\item consider linearly independent $x^0, x^1,x^2\in S^1$ and set $v:=\partial_{x^0x^1x^2}f$;
	\item if $\partial_{x^i}v\ne 0$ for all $i\in\{0,1,2\}$, the $x^0x^1x^2$-antiderivative $V$ of $v$ (relative to itself) is defined, and is a fourth power of a multiple of $v$;
	\item set $g:=f-V$, so that $g\in S_4$ and $\partial_{x^0x^1x^2}g=0$, as required to exploit \autoref{main} for $g$ (in place of $f$);
	\item if the hypotheses listed at the end of the proposition are satisfied, we obtain a length six power sum decomposition of $g$, hence a length seven decomposition of $f$.
\end{itemize}
\end{rem}

The conditions needed in the above procedure are quite mild: for a generic choice of $x^0$, $x^1$, $x^2$, $v$ works well, generic quadratic forms are not squares, and if the roots mentioned in the last condition turns out to be sufficiently generic, they are distinct and avoid the forbidden three points. One might hope to find in this way plenty of length seven decomposition of whatever given quartic. This would suggest a procedure in higher degree, and also constitutes a basis for an induction process. But, as a matter of facts, there are very special quartics for which the procedure fails. To understand those special cases may be helpful to determine if the procedure can be refined, or has to be abandoned.

\section{Special cases}

In this section we keep the notation of \autoref{procedura}.

A rather trivial situation is when $f$ is a fourth power. In this case, $x^0$, $x^1$, $x^2$, for which $v$ works well can easily be found, unless $f=0$, but $g$ always vanishes (hence the forms $q_0$, $q_1$ and $q_2$, obtained by trying to exploit \autoref{main} for $g$, are trivially squares).

When $f$ has rank two, that is, it is the sum of two fourth power of two linearly independent linear forms, the forms $q_0$, $q_1$ and $q_2$, obtained by trying to exploit \autoref{main}, are always squares. The proof is not immediate, but not difficult and we skip it. We also skip the analysis of rank three forms, and directly pass to the perhaps oddest case in our knowledge. Indeed \cite[Examples~3.3 and~3.4]{BD} give many $f\in S_4$ for which every length seven power sum decomposition must involve some $v\in S_1$ for which the point $\Span{v}\in\Ps S_1$ must lie on some special lines in $\Ps S_1$ (depending on $f$). Thus our procedure for length seven decompositions either produces points on the special lines for every choice of $x^0$, $x^1$, $x^2$ for which the procedure works, or fails for all choices of $x^0$, $x^1$, $x^2$: an odd behavior in both cases. We performed computational experiments, and an unexpected outcome has been that the composition $\theta_2\circ\theta_1\circ\theta_0$ always had only one fixed point (instead of two, as a generic automorphism of a projective lines). Moreover, in one case we checked all the roots, and found that there was a root lying on the predicted special line on each of the three lines $x^0=0$, $x^1=0$, $x^2=0$.

Another potentially critical situation is suggested by \cite[Proposition~3.2]{D0}: a length seven decomposition of a quartic made by a double line and a conic not tangent to the line, must have the linear forms in a special position. The procedure did not fail, and produced a decomposition in the expected special position. Remarkably, again the composition $\theta_2\circ\theta_1\circ\theta_0$ had only one fixed point. We also checked the procedure on a form taken at random, and this time the composition $\theta_2\circ\theta_1\circ\theta_0$ had two fixed points.


\begin{thebibliography}{12}

\bibitem{AC}
E.~Angelini and  L.~Chiantini.
\newblock Minimality and uniqueness for decompositions of specific ternary forms.
\newblock{\em Mathematics of Computation}, \textbf{91}(334):973--1006 (2022).
\newblock\href{https://doi.org/10.1090/mcom/3681}{doi:10.1090/mcom/3681}.

\bibitem{BD}
E.~Ballico and A.~De Paris.
\newblock Generic Power Sum Decompositions and Bounds for the Waring Rank.
\newblock{\em Discrete \& Computational Geometry}, \textbf{57}:896--914 (2017).
\newblock\href{https://doi.org/10.1007/s00454-017-9886-7}{doi:10.1007/s00454-017-9886-7}

\bibitem{BT}
G.~Blekherman and Z.~Teitler.
\newblock On maximum, typical and generic ranks.
\newblock {\em Mathematische Annalen}, \textbf{362}(3--4):1021--1031 (2015).
\newblock\href{https://doi.org/10.1007/s00208-014-1150-3}{doi:10.1007/s00208-014-1150-3}

\bibitem{BT2}
M.~Boij and Z.~Teitler
\newblock A bound for the {Waring} rank of the determinant via syzygies.
\newblock {\em Linear Algebra and its Applications} \textbf{587}:195--214 (2020).
\newblock\href{https://doi.org/10.1016/j.laa.2019.11.007}{doi:10.1016/j.laa.2019.11.007}

\bibitem{CGLV}
A.~Conner, F.~Gesmundo, J.~M.~Landsberg, E.~Ventura.
\newblock Rank and border rank of {Kronecker} powers of tensors and {Strassen}'s laser method.
\newblock {\em Computational Complexity} \textbf{31}(1):1016--3328 (2022).
\newblock\href{https://doi.org/10.1007/s00037-021-00217-y}{doi:10.1007/s00037-021-00217-y}

\bibitem{D0}
Alessandro De Paris
\newblock A remark on {W}aring decompositions of some special plane quartics.
\newblock {\em Electronic Journal of Linear Algebra}, \textbf{26}:510--519 (2013).
\newblock \href{https://doi.org/10.13001/1081-3810.1666}{doi:10.13001/1081-3810.1666}

\bibitem{D1}
Alessandro De Paris
\newblock A proof that the maximum rank for ternary quartics is seven.
\newblock {\em Le Matematiche}, \textbf{70}(2):3--18 (2015).
\newblock\href{https://doi.org/10.4418/2015.70.2.1}{doi:10.4418/2015.70.2.1}

\bibitem{D2}
Alessandro De Paris
\newblock Seeking for the Maximum Symmetric Rank.
\newblock {\em Mathematics}, \textbf{6}(11):247 (2018).
\newblock \href{https://doi.org/10.3390/math6110247}{doi:10.3390/math6110247}

\bibitem{D3}
Alessandro De Paris
\newblock The asymptotic leading term for maximum rank of ternary forms of a given degree.
\newblock{\em Linear Algebra and its Applications}, \textbf{500}:15--29 (2016)
\newblock\href{https://doi.org/10.1016/j.laa.2016.03.012}{doi:10.1016/j.laa.2016.03.012}
 
\bibitem{G}
Anthony~V.~Geramita
\newblock {\em Expos\'e I A: Inverse systems of fat points: Waring's problem,
  secant varieties of Veronese varieties and parameter spaces for Gorenstein
  ideals}.
\newblock In: The {C}urves {S}eminar at {Q}ueen's, {V}ol.\ {X} ({K}ingston,
  {ON}, 1995).
\newblock Queen's University, Kingston (1996).
  
\bibitem{L1}
Joseph~M.~Landsberg.
\newblock {\em Tensors: Geometry and applications}.
\newblock American Mathematical Society (AMS), Providence, Rhode Island (2012).

\bibitem{L2}
Joseph~M.~Landsberg.
\newblock {\em Geometry and complexity theory}.
\newblock Cambridge University Press, Cambridge (2017).
\newblock\href{https://doi.org/10.1017/9781108183192}{doi:10.1017/9781108183192}

\bibitem{OR}
G.~Ottaviani and P.~Reichenbach.
\newblock {\em Tensor Rank and Complexity}.
\newblock\href{https://arxiv.org/abs/2004.01492}{arXiv:2004.01492}

\end{thebibliography}
\end{document}